\documentclass[jap,press]{apt} \volno{63} \partno{1}
\renewcommand{\crnotice}{}
\paperno{}
\firstpage{1}
\usepackage[colorlinks = true,linkcolor = blue,urlcolor  = blue,citecolor = blue,anchorcolor = blue]{hyperref}

\usepackage{amsmath,amssymb,amsfonts}
\usepackage{bm}
\usepackage{graphicx}
\usepackage{xcolor}
\usepackage{hyperref}
\usepackage{enumitem}
\usepackage{mathrsfs}
\usepackage{accents}
\usepackage{mathtools}
\usepackage{authblk}
\usepackage{mathrsfs}
\usepackage{booktabs}
\usepackage{float}
\usepackage{dcolumn}
\usepackage{comment}

\newcommand{\p}{\kP}
\newcommand{\E}{\kE}

\newcommand{\dee}{\sd}

\newcommand{\ind}{\mathbf{1}}

\newcommand{\R}{\mathbb{R}}

\newcommand{\as}{\text{a.s.}}

\newcommand{\eqd}{\eqdist}

\newcommand{\level}{\delta}
\newcommand{\gridx}{\eta}

\usepackage[colorlinks = true,linkcolor = blue,urlcolor  = blue,citecolor = blue,anchorcolor = blue]{hyperref}
\authornames{M.~FISCHER AND P.~W.~GLYNN}
\shorttitle{Conventional Confidence Interval Coverage Proportions}
\hypersetup{pdftitle={On the Proportion of Time that a Conventional Confidence Interval Covers},
            pdfauthor={Monte Fischer and Peter W. Glynn}, pdflang={en}, pdfdisplaydoctitle=true}

\begin{document}%

\title{On the Proportion of Time that a Conventional Confidence Interval Covers}
\date{November 2025}

\noindent\authoronenoaffil{Monte Fischer}
\authortwo[Stanford University]{Peter W.~Glynn} 

\emailone{monte@montefischer.com}
\emailtwo{glynn@stanford.edu}

\begin{abstract}
    The concept of confidence sequences, or `anytime confidence intervals,' has attracted significant interest in recent years. This motivates the question of how well conventional confidence intervals behave when similarly evaluated. In particular, we study the probability distribution of the proportion of time a conventional confidence interval procedure covers the population mean, when the total time $n$ spent sampling is large. We establish an invariance principle for the limiting distribution as $n$ tends to infinity and show that the limit random variable $Z(1)$ can be expressed as a functional of Brownian motion. The random variable $Z(1)$ exhibits a scale invariance property that enables the derivation of a partial differential equation (PDE) for the density of $Z(1)$ jointly with the Brownian motion at $t=1$, namely $B(1)$. We show that this density can also be computed as the stationary distribution of a two-dimensional singular diffusion, one component of which is an Ornstein-Uhlenbeck process. We then derive an approximating Markov chain that allows us to numerically solve the PDE. This density has a number of interesting features that provide various insights into the time-averaged coverage proportion problem. 
\end{abstract}

\keywords{Coverage probability; anytime-valid inference; sequential analysis; Brownian motion; Ornstein-Uhlenbeck process; occupation time; scale invariance; time-average coverage; large-sample theory; numerical discretization; Monte Carlo simulation}%

\ams{60F17}{62E20}

\section{Introduction}
Suppose that we sample independent and identically distributed (i.i.d.) real-valued observations $X_1, X_2, \dots$ from an unknown population, with the goal of using the sample to estimate $\E X$, where $X$ is a generic observation from the population. Given a fixed number of observations $X_1, X_2, \dots X_k$, it is standard statistical practice to construct an accompanying approximate confidence interval $\mathscr I_k$ for $\E X$ based on a procedure which guarantees asymptotic coverage, such that $\liminf_{k \to \infty} \p(\E X \in \mathscr I_k) \ge 1-\delta$ for a chosen $\delta > 0$. In recent years, there has been increasing interest in the development and application of \textit{confidence sequences}, or `anytime confidence intervals,' in which the goal is to construct a sequence $(I_k : k \ge 1)$ of intervals such that the time-uniform coverage probability $\p (\E X \in I_k , k \ge 1) \ge 1-\level$, for $\level > 0$; see \cite{howard2021time}. In order that the coverage be uniform in $k$, these intervals must, on average, be larger than conventional intervals $\mathscr I_k$. Given the ubiquitous use of conventional confidence intervals, it is of interest to understand the characteristics of the proportion of time that the sequence $\mathscr I_1$, $\mathscr I_2$, \dots, $\mathscr I_n$ covers $\E X$ when $n$ is large. 

In particular, the standard $100(1-\level)\%$ large-sample approximate confidence interval $\mathscr I_k$ constructed from $X_1, X_2, \dots, X_k$ is given by 
\[ \mathscr I_k = [\bar X_k - z \frac{s_k}{\sqrt k} , \bar X_k + z \frac{s_k}{\sqrt k}] \]
for $k \ge 2$, 
where $\bar X_k = (X_1 + \cdots + X_k) / k$ is the sample mean, $s_k = (\sum_{i=1}^k (X_i - \bar X_k)^2 / (k-1))^{1/2}$ is the sample standard deviation, and $z$ is chosen so that $\p(-z \le \mathcal N(0,1) \le z) = 1-\level$, 
where $\mathcal N(0,1)$ is a normal random variable (RV) with mean 0 and variance 1. 
(For $k=1$, $\mathscr I_1$ can be defined arbitrarily.) 
Our goal in this work is to study the RV $\mathscr P_n$ defined by 
\[ \mathscr P_n = \frac{1}{n} \sum_{k=1}^n \ind(\E X \in \mathscr I_k), \]
namely the proportion of the intervals $\mathscr I_1, \mathscr I_2, \dots, \mathscr I_n$ that cover $\E X$.

In Section \ref{sec:2}, we establish an invariance principle for $\mathscr P_n$ (Theorem 1), proving that when $0 < \E X^2 < \infty$, 
\begin{equation} \label{1.1}
    \mathscr P_n \Rightarrow Z(1)
\end{equation}
as $n \to \infty$, where 
\[ Z(t) = \int_0^t \ind(-z \le \frac{B(s)}{\sqrt s} \le z) \dee s \]
for $t \ge 0$, $B = (B(s) : s \ge 0)$ is a standard Brownian motion (with drift 0 and unit volatility), and $\Rightarrow$ denotes weak convergence. The relation \eqref{1.1} establishes that the limit RV $Z(1)$ is invariant to the fine structure of the RV $X$, and is valid so long as $X$ has finite and positive variance. Note that despite the fact that $\mathscr P_n$ is an average, \eqref{1.1} shows that it does not converge to its asymptotic expectation $1 - \level$, and hence it does not obey a conventional law of large numbers. 

The limit theorem of Section \ref{sec:2} is formulated in such a way that it covers virtually any estimator that satisfies a central limit theorem (CLT) for which there is an accompanying strongly consistent variance estimator. Hence, $Z(1)$ characterizes the limiting proportion in much greater generality than just the setting in which a mean $\E X$ is being estimated. For example, the estimation of a smooth function of expectations $g(EX)$ is covered by our theory, where $g$ is continuously differentiable and $X$ is (potentially) vector-valued, as is quantile estimation, steady-state estimation, and many other problems.

In Section \ref{sec:3}, we develop several partial differential equations (PDEs) that permit computation of various expectations associated with $Z(1)$. One PDE (see \eqref{pde}) leverages an important scaling property of the RV's $(Z(t), B(t))$ (as a function of $t$), and permits computation of the joint density of the RV $(Z(1), B(1))$. We then prove that this PDE can also be viewed as the PDE associated with the stationary density of a certain Markov process.

Section \ref{sec:4} then studies the density of $Z(1)$ numerically, by solving the PDE. In particular, we develop a carefully constructed Markov chain approximation to the Markov process and use its stationary distribution as an approximation to the density. It turns out that the density of $Z(1)$ exhibits a surprisingly wide variety of different shapes as a function of the parameter $z$ (or, equivalently, $\level$). One important feature of the density is that when the coverage $1 - \level$ is close to 1, $\mathscr P_n$ is greater than $1 - \level$ with surprisingly high probability. So, the proportion of time the confidence intervals cover $\E X$ tends (most of the time) to be significantly higher than $1-\level$. This is balanced by the fact that there are low probability paths where the confidence intervals do not cover $\E X$ for a substantial proportion of time. The conditional densities also exhibit shapes that provide insight into the coverage proportion problem. Section \ref{sec multivariate} concludes the paper with a brief discussion of how the theory generalizes to the setting of $d$-dimensional confidence region coverage.

We note that this theory is directly relevant to much of the statistical and simulation literature that uses Monte Carlo experimentation to evaluate the coverage of newly proposed statistical and simulation procedures for producing confidence intervals. It is common practice to use Monte Carlo to compute the coverage probability for the proposed interval at different statistical sample sizes or simulation run lengths; see for example \cite{choquet1999bootstrap,flegal2010batch,owen2025better}. When estimating such coverages at different sample sizes $k_1 < k_2 < \cdots < k_r$ it is natural to simply run repeated simulations to sample size $k_r$, and to then check the coverages at sample sizes $k_1, k_2, \dots, k_{r-1}$ along the same sample path. When computing coverage probabilities via the use of common sample paths, the RV $\mathscr P_n$ provides a description of the coverage fraction along the entire path when $k_i = i$, $1 \le i \le n$. This strong correlation of the coverage fraction along such sample paths suggests that a better practice, in coverage evaluation of statistical and simulation procedures, is to run independent Monte Carlo experiments at each sample size $k_1$, $k_2$, \dots, $k_r$.

The variable $\mathscr P_n$ also has a natural hypothesis testing interpretation. Suppose a null hypothesis is being tested sequentially using a stream of i.i.d.~samples $(X_k : k \ge 1)$. For concreteness, we posit the null hypothesis to be $\E X = 0$. At each time $k$, a conventional confidence interval $\mathscr I_k$ is formed for $\E X$. The statistician provisionally rejects the null at each time $k$ according to whether $0 \notin \mathscr I_k$. Under the null hypothesis, $\mathscr P_n$ is the proportion of times 1 through $n$ for which the statistician (correctly) fails to reject the null hypothesis. The object $\mathscr P_n$ is then of interest, as it describes the proportion of null hypothesis rejections in this repeated hypothesis testing setting. 

The study of Brownian motion in connection with a region with square root boundary was undertaken by \cite{breiman1967first} in the context of the first exit time of Brownian motion from the region. The present study, by comparison, is the first development of the properties of the associated \textit{occupation time} of Brownian motion within the region with square root boundary, over a finite interval. 

\section[An Invariance Principle for Pn]{An Invariance Principle for $\mathscr P_n$} \label{sec:2}

In this section, we consider a general estimation environment in which the goal is to estimate a quantity $\alpha$ via an estimator $\alpha(t)$ based on a sample indexed by the parameter $t$. For example, in the setting discussed in the Introduction, $\alpha = \E X$ and the estimator $\alpha(t)$ is given by $\bar X_{\lfloor t \rfloor }$, where $\lfloor t \rfloor$ is the greatest integer less than or equal to $t$. We assume that $(\alpha(t) : t \ge 0)$ satisfies a functional central limit theorem (FCLT):
\begin{assumption} \label{a1 ci}
    Assume that there exists $\sigma > 0$ for which 
    \[ (\epsilon^{-1/2} (\alpha(t/\epsilon) - \alpha) : 0 < t < \infty) \Rightarrow (\sigma B(t)/t : 0 < t < \infty) \]
    in $D(0,\infty)$ as $\epsilon \downarrow 0$.
\end{assumption}
\noindent Convergence in the space $D(0, \infty)$ means convergence in $D[a,b]$ for each $0 < a < b < \infty$. As such, $x_n \to x_\infty$ in $D(0,\infty)$ when $x_\infty$ is continuous requires that $x_n(\cdot)$ converge to $x_\infty(\cdot)$ uniformly in each compact subinterval of $(0,\infty)$; see, for example, \cite{whitt2002stochastic}.

We further assume the existence of an estimator $s(t)$ for the standard deviation $\sigma$:
\begin{assumption} \label{a2 ci}
    Assume that
    \[ s(t) \to \sigma \quad \as \]
    as $t \to \infty$.
\end{assumption}
In the i.i.d.~setting described in the Introduction, we can take $\alpha(t) = \bar X_{\lfloor t \rfloor}$ and $s(t) = s_{\lfloor t \rfloor}$ for $t \ge 2$, where $(s_n : n \ge 1)$ is the sample standard deviation.

Under Assumptions \ref{a1 ci} and \ref{a2 ci},
\[ t^{1/2} (\alpha(t) - \alpha) / s(t) \Rightarrow \mathcal N(0,1)\]
as $t \to \infty$, so that 
\[ \mathscr I(t) = [\alpha(t) - z \frac{s(t)}{\sqrt t}, \alpha(t) + z \frac{s(t)}{\sqrt t}] \]
is an asymptotic $100(1-\level)\%$ confidence interval for $\alpha$. Then, for $t > 0$, put 
\[ \mathscr P(t) \triangleq \frac{1}{t} \int_0^t \ind(\alpha \in \mathscr I(s)) \dee s, \]
so that $\mathscr P(n)$ generalizes the RV $\mathscr P_n$ described in the Introduction. (As in the Introduction, we may define $\mathscr I(s)$ for $s \le t_0$ arbitrarily, without affecting the limiting behavior of $\mathscr P(t)$.)

As noted in the Introduction, this framework covers a wide variety of different estimation settings. A discussion of the specific estimation environments within which Assumptions \ref{a1 ci} and \ref{a2 ci} hold can be found in, for example, \cite{glynn1992asymptotic}. 

Our main result in this section is the following limit theorem. Recall that we define $Z(t) \triangleq (1/t) \int_0^t \ind(-z \le B(s) / \sqrt s \le z) \dee s$.

\begin{thm}
    Under Assumptions \ref{a1 ci} and \ref{a2 ci},
    \[ \mathscr P(t) \Rightarrow Z(1) \]
    as $t \to \infty$.
\end{thm}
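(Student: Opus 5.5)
Write $\epsilon = 1/t$ and change variables $s = u/\epsilon$ in the definition of $\mathscr P(t)$. This gives
\[ \mathscr P(1/\epsilon) = \int_0^1 \ind\Bigl(\bigl|\epsilon^{-1/2}(\alpha(u/\epsilon)-\alpha)\bigr| \sqrt u \le z\, s(u/\epsilon)\Bigr) \dee u . \]
Put $Y_\epsilon(u) = \epsilon^{-1/2}(\alpha(u/\epsilon)-\alpha)\sqrt u / s(u/\epsilon)$. By Assumption \ref{a1 ci}, $\epsilon^{-1/2}(\alpha(\cdot/\epsilon)-\alpha) \Rightarrow \sigma B(\cdot)/\cdot$ in $D(0,\infty)$. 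By Assumption \ref{a2 ci}, $\sup_{u \ge a} |s(u/\epsilon)-\sigma| \to 0$ a.s. for each $a>0$. Since the second limit is deterministic, the pair converges jointly, and the continuous mapping theorem gives $Y_\epsilon \Rightarrow B(u)/\sqrt u$ in $D[a,1]$ for every $a\in(0,1)$. By Skorohod representation we may take this convergence to be a.s. uniform on $[a,1]$.

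Next, fix $a$ and consider the functional $h_a(y) = \int_a^1 \ind(|y(u)| \le z)\dee u$ on $D[a,1]$. It is continuous at every continuous $y$ for which the set $\{u \in [a,1] : |y(u)| = z\}$ has Lebesgue measure zero. Indeed, if $y_n \to y$ uniformly, then $\ind(|y_n(u)|\le z) \to \ind(|y(u)|\le z)$ for every $u$ with $|y(u)|\ne z$, and bounded convergence finishes the argument. For $y(u)=B(u)/\sqrt u$, Fubini gives
\[ \E\,\mathrm{Leb}\{u : |B(u)| = z\sqrt u\} = \int_a^1 \p(|B(u)|=z\sqrt u)\dee u = 0, \]
because $B(u)$ has a density. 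So the exceptional set is null almost surely, and the continuous mapping theorem gives $h_a(Y_\epsilon) \Rightarrow \int_a^1 \ind(|B(u)|\le z\sqrt u)\dee u$.

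The initial segment is handled by the elementary bound $\bigl|\mathscr P(1/\epsilon) - h_a(Y_\epsilon)\bigr| \le a$ for all $\epsilon$. The limit RV satisfies the same bound, $\bigl|Z(1) - \int_a^1 \ind(|B(u)|\le z\sqrt u)\dee u\bigr| \le a$. A standard approximation argument then completes the proof: Billingsley's Theorem 3.2, or simply bounded Lipschitz test functions with $a\downarrow 0$. This also explains why $\mathscr I(s)$ may be defined arbitrarily for small $s$, since such $s$ lie in $[0, a/\epsilon]$ and contribute at most $a$.

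The main obstacle I expect is the joint handling near $u=0$. Assumption \ref{a1 ci} only gives convergence on compacts of $(0,\infty)$, and $s(u/\epsilon)$ for small $u$ is uncontrolled. The trivial $a$-bound above sidesteps this, so the remaining delicate point is the boundary-null-set continuity argument for the indicator functional, which I will verify carefully.
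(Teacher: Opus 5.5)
Your proof is correct and follows the paper's argument: the same change of variables $s = u/\epsilon$, the same continuous-mapping step for the indicator-integral functional, and the same Fubini argument that $\{u : |B(u)| = z\sqrt u\}$ is a.s.\ Lebesgue-null. The only real difference is your truncation at $a$ followed by Billingsley's approximation theorem, which the paper skips by applying $\varphi(x) = \int_0^1 \ind(|s^{1/2}x(s)| \le z)\,\dee s$ directly on $D(0,\infty)$; there, convergence to a continuous limit already gives pointwise convergence at every $s \in (0,1]$, and bounded convergence then handles the neighborhood of $0$, so that neighborhood is not actually an obstacle. In return, your version only ever uses the studentized process on $[a,1]$, where $s(u/\epsilon)$ is eventually bounded away from $0$, and it spells out the Slutsky-type joint convergence of the ratio that the paper simply asserts.
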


\begin{proof}
    For $x \in D(0, \infty)$, put 
    \[ \varphi(x) = \int_0^1 \ind(-z \le s^{1/2} x(s) \le z) \dee s.\]
    We note that $\varphi$ is continuous on $D(0,\infty)$ at continuous functions $x$ for which $t^{1/2} |x(t)| \ne z$ almost everywhere in $t$. But 
    \[ \E \int_0^1 I\left(\frac{|B(s)|}{\sqrt s} = z\right) \dee s = \int_0^1 \p( |\mathcal N(0,1)| = z) \dee s = 0, \]
    so 
    \[ \int_0^1 I\left(\frac{|B(s)|}{\sqrt s} = z\right) \dee s = 0 \quad \as,\]
    and hence $\varphi$ is continuous at $(B(s) / s : 0 < s < \infty)$ almost surely (a.s.).

    Under Assumptions \ref{a1 ci} and \ref{a2 ci},
    \[ (\epsilon^{-1/2} (\alpha(t/\epsilon) - \alpha) / s(t / \epsilon) : 0 < t < \infty) \Rightarrow (B(t) / t : 0 < t < \infty) \]
    as $\epsilon \downarrow 0$ in $D(0, \infty)$, so 
    \begin{equation} \label{2.1}
        \varphi((\epsilon^{-1/2} (\alpha(t / \epsilon) - \alpha) / s(t / \epsilon) : 0 < t < \infty)) \Rightarrow \varphi((B(t)/t : 0 < t < \infty)) = Z(1)
    \end{equation}
    as $\epsilon \downarrow 0$. But the left-hand side of \eqref{2.1} equals 
    \begin{align*}
        & \int_0^1 \ind(-z \le t^{1/2} \epsilon^{-1/2} ( \alpha(t / \epsilon) - \alpha) / s(t / \epsilon) \le z ) \dee t \\
        &= \epsilon \int_0^{1/\epsilon} I (-z \le u^{1/2} (\alpha(u) - \alpha) / s(u) \le z) \dee u \\
        &= \epsilon \int_0^{1/\epsilon} \ind(\alpha \in \mathscr I(u)) \dee u \\
        &= \mathscr P(1/\epsilon),
    \end{align*}
    proving the theorem.
\end{proof}

We can easily derive a few basic properties of $Z(1)$. First, 
\begin{align}
\E Z(1) &= \int_0^1 \p(-z \le t^{-1/2}B(t) \le z) \dee t \nonumber \\
&= \int_0^1 \p(-z \le \mathcal N(0,1) \le z) \dee t  \nonumber\\
&= 1-\level. \label{2.2}
\end{align}
Also,
\begin{align}
    \E Z(1)^2 &= 2 \int_0^1 \int_s^1 \p\left(-z \le \frac{B(t)}{\sqrt t} \le z, -z \le \frac{B(s)}{\sqrt s} \le z\right) \dee t \dee s \nonumber\\
    &= 2 \int_0^1 \int_{- z \sqrt s}^{z \sqrt s} \int_s^1 \p\left(-z \le \frac{B(t-s) + x \sqrt s}{\sqrt t} \le z\right) \dee t \cdot \p\left(\frac{B(s)}{\sqrt s} \in \dee x\right) \dee s \nonumber\\
    &= 2 \int_0^1 \int_{- z \sqrt s}^{z \sqrt s} \int_s^1 \left[ \Phi\left(\frac{zt^{1/2} - xs^{1/2}}{\sqrt {t-s}}\right) - \Phi\left(\frac{-zt^{1/2} - xs^{1/2}}{\sqrt {t-s}}\right) \right] \dee t \cdot \phi(x) \dee x \dee s, \label{2.3}
\end{align}
where $\phi(\cdot)$ and $\Phi(\cdot)$ are, respectively, the density and cumulative distribution function of a $\mathcal N(0,1)$ RV. Table \ref{tab:1} displays $\sqrt {\var Z(1)}$ for various values of $z = z(\delta)$, computed from \eqref{2.2} and \eqref {2.3} by numerical integration.

\begin{table}[H]
    \centering
\begin{tabular}{rrr}
\toprule
$\level$ & $z(\level)$ & $\sqrt{\var Z(1)}$ \\
\midrule

0.01 & 2.5758 & 0.0537 \\
0.05 & 1.9600 & 0.1280 \\
0.10 & 1.6449 & 0.1813 \\
0.30 & 1.0364 & 0.2757 \\
0.45 & 0.7554 & 0.2844 \\
0.50 & 0.6745 & 0.2787 \\
0.55 & 0.5978 & 0.2690 \\
0.70 & 0.3853 & 0.2166 \\
0.90 & 0.1257 & 0.0904 \\
0.95 & 0.0627 & 0.0478 \\
0.99 & 0.0125 & 0.0102 \\
\bottomrule
\end{tabular}
\caption{Standard deviation of $Z(1)$ associated with various $\level \in (0,1)$.}
\label{tab:1}
\end{table}

\section[PDEs Associated with Z(1)]{PDEs Associated with $Z(1)$} \label{sec:3}
Since $Z(1)$ is derived from the most basic of all diffusion processes, namely standard Brownian motion, it is not surprising that various expectations associated with $Z(1)$ can be computed as solutions to PDEs. 

\begin{prop} ~ \label{prop1}
    \begin{enumerate}[label=\alph*)]
        \item \label{prop1:a} Suppose that $u:[0,1]^2 \times \R \to \R$ is a bounded solution of the PDE 
        \[ \frac{\partial u}{\partial t} (t, x, y) = \frac{\partial u}{\partial x} (t, x, y) \ind(-z \le \frac{y}{\sqrt {1-t}} \le z) + \frac{1}{2} \frac{\partial^2 u}{\partial y^2} (t, x, y),\]
        for $0 \le t, x \le 1$, and $y \in \R$, subject to $u(0,x,y) = h(x)$. Then, $u(1,0,0) = \E h(Z(1))$.
        \item \label{prop1:b} Suppose that $v : \R^2 \to \R$ is a bounded solution of the PDE 
        \[ \frac{\partial v}{\partial t} (t, x) = \theta \ind( -z \le \frac{x}{\sqrt {1-t}} \le z)\,v (t, x) + \frac{1}{2} \, \frac{\partial^2 v}{\partial x^2} (t, x) \]
        for $t \in [0,1]$ and $x \in \R$, subject to $v(0,x) = 1$ for $x \in \R$. Then, 
        \[ \E \exp(\theta Z(1)) = v(1,0). \]
    \end{enumerate}
\end{prop}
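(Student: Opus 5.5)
Both parts are Feynman--Kac / Kolmogorov backward statements in which time runs backwards: $t$ in the PDE is ``time to go''. The plan is to build a martingale from the solution evaluated along a suitable Markov process and read off its expectation at the initial and terminal times. Throughout, $Z(s)=\int_0^s \ind(-z\le B(r)/\sqrt r\le z)\,\dee r$ is the unnormalized occupation integral, so that $Z(1)$ is the variable of Theorem 1.

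For part \ref{prop1:a}, we use the two-dimensional process $(Z(s),B(s))$ for $s\in[0,1]$. It is a time-inhomogeneous Markov process with generator
\[
\mathcal L_s f(x,y)=\ind\left(-z\le \frac{y}{\sqrt s}\le z\right)\frac{\partial f}{\partial x}(x,y)+\frac12\frac{\partial^2 f}{\partial y^2}(x,y).
\]
We set $M(s)=u(1-s,Z(s),B(s))$ for $0\le s\le 1$. Applying It\^o's formula (the $x$-component has finite variation, so only the $yy$ second-order term appears) gives
\[
\dee M(s)=\Bigl[-u_t+\ind\bigl(-z\le B(s)/\sqrt s\le z\bigr)u_x+\tfrac12 u_{yy}\Bigr](1-s,Z(s),B(s))\,\dee s+u_y\,\dee B(s).
\]
At time-to-go $t=1-s$, the PDE's indicator is $\ind(-z\le y/\sqrt{1-t}\le z)=\ind(-z\le y/\sqrt s\le z)$. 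Hence the drift vanishes and $M$ is a local martingale. Since $u$ is bounded, $M$ is a bounded local martingale and therefore a true martingale. Consequently
\[
u(1,0,0)=M(0)=\E M(1)=\E u(0,Z(1),B(1))=\E h(Z(1)).
\]
Note that $Z(s)\in[0,1]$ for $s\in[0,1]$, so the process stays in the domain $[0,1]\times\R$ of $x,y$.

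For part \ref{prop1:b}, we use the classical Feynman--Kac argument. We set
\[
N(s)=\exp\bigl(\theta Z(s)\bigr)\,v\bigl(1-s,B(s)\bigr).
\]
It\^o's formula gives a $\dee s$-coefficient proportional to $-v_t+\theta\,\ind(-z\le B(s)/\sqrt s\le z)\,v+\tfrac12 v_{xx}$, which vanishes by the PDE. Because $|N|\le e^{|\theta|}\sup|v|$, $N$ is a bounded local martingale and hence a martingale. Therefore
\[
v(1,0)=N(0)=\E N(1)=\E\bigl[e^{\theta Z(1)}v(0,B(1))\bigr]=\E e^{\theta Z(1)}.
\]
Alternatively, (b) follows from (a) with $h(x)=e^{\theta x}$ and the ansatz $u(t,x,y)=e^{\theta x}v(t,y)$. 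Under this substitution one checks directly that (b)'s PDE becomes (a)'s.

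The main obstacle is regularity. The coefficient $\ind(\cdot)$ is discontinuous in $y$ across the curves $y=\pm z\sqrt{1-t}$, and it is singular at $t=1$ (that is, $s=0$), where $\sqrt{1-t}\to 0$. Classical $C^{1,2}$ solutions therefore need not exist across these curves. I would interpret ``solution'' as one regular enough for a generalized It\^o formula, for instance $u\in C^{1}$ with $u_{yy}$ locally bounded and the PDE holding almost everywhere; the It\^o--Krylov formula then applies because $B$ is nondegenerate in $y$. The boundary curves are hit only on a Lebesgue-null set of times a.s., which is exactly the fact used in the proof of Theorem 1, so the a.e.\ failure of the PDE there is harmless.

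Near $s=0$, I would apply the martingale argument on $[\epsilon,1]$ to get
\[
\E u(1-\epsilon,Z(\epsilon),B(\epsilon))=\E h(Z(1)).
\]
Then I let $\epsilon\downarrow0$, using $Z(\epsilon)\le\epsilon$, $B(\epsilon)\to0$, continuity of $u$ at $(1,0,0)$, and bounded convergence.
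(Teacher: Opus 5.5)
Your proof is correct and is the paper's argument: It\^o's formula applied to $u(1-s,Z(s),B(s))$ and to $e^{\theta Z(s)}v(1-s,B(s))$, the PDE cancelling the drift, and boundedness turning the local martingale into a true martingale. Your additions go beyond the paper, which applies It\^o's formula as if $u$ and $v$ were classical solutions: the localization to $[\epsilon,1]$ correctly handles the singular coefficient at $s=0$, and the ansatz $u=e^{\theta x}v(t,y)$ does reduce (b) to (a); in the regularity remark, what actually carries the argument is that the curves $|y|=z\sqrt{1-t}$ are occupied on a Lebesgue-null set of times, not the It\^o--Krylov formula, whose standard form assumes a nondegenerate diffusion, which $(Z,B)$ is not.
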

Note that the first PDE, in three variables, allows us to compute $\E h(Z(1))$ for a generic (bounded) function $h$. The second PDE, in two variables, permits computation of the moment generating function (MGF) of $Z(1)$ at a fixed $\theta$.

\begin{proof}[Proof of Proposition \ref{prop1}]
    Observe that It\^{o}'s formula implies that 
    \begin{align*}
        &\dee u (1-t, Z(t), B(t)) \\
        &= -\frac{\partial u}{\partial t} u(1-t, Z(t), B(t)) \dee t + \frac{\partial u}{\partial x} (1-t, Z(t), B(t)) \cdot \ind(-z \le \frac{B(t)}{\sqrt{1 - (1-t)}} \le z) \dee t \\
        &\quad + \frac{\partial u}{\partial y} (1 - t, Z(t), B(t)) \dee B(t) + \frac{1}{2} \frac{\partial^2 u}{\partial y^2} (1-t, Z(t), B(t)) \dee t \\
        &= \frac{\partial u}{\partial y} (1-t, Z(t), B(t)) \dee B(t),
    \end{align*}
    where the second equality follows as a consequence of the fact that $u$ satisfies the PDE.

    Because of the assumed boundedness, 
    \[ M(t) \triangleq u(1-t, Z(t), B(t)) \]
    is a martingale adapted to $\mathcal F \triangleq \sigma(B(s): 0 \le s \le t)$ for $t \ge 0$. Hence, 
    \[ \E u(0, Z(1), B(1)) = u(1,0,0).\]
    But $u(0,x,0) = h(x)$, so we conclude that 
    \[ u(1,0,0) = \E h(Z(1)),\]
    yielding part \ref{prop1:a}.

    For part \ref{prop1:b}, we apply It\^{o}'s formula as follows:
    \begin{align*}
        &\dee (\exp (\theta Z(t)) v(1-t, B(t))) \\
        &= \exp(\theta Z(t)) v(1-t, B(t)) \theta \dee Z(t) \\
        &\quad + \exp(\theta Z(t)) \Big[-\frac{\partial v}{\partial t} (1-t, B(t)) \dee t + \frac{\partial v}{\partial x} (1-t, B(t)) \dee B(t) \\
        &\qquad \qquad \qquad \quad+ \frac{1}{2} \frac{\partial^2 v}{\partial x^2} (1-t, B(t)) \dee t \Big] \\
        &= \exp(\theta Z(t)) \Big[\theta \ind(-z \le \frac{B(t)}{\sqrt{1-(1-t)}} \le z) v(1-t, B(t)) \\
        &\qquad \qquad  \qquad \quad - \frac{\partial v}{\partial t}(1-t, B(t)) + \frac{1}{2} \frac{\partial^2 v}{\partial x^2} (1-t, B(t)) \Big] \dee t \\
        &\qquad + \exp(\theta Z(t)) \frac{\partial v}{\partial x} (1-t, B(t)) \dee B(t) \\
        &= \exp(\theta Z(t)) \frac{\partial v}{\partial x} v(1-t, B(t)) \dee B(t).
    \end{align*}
    The PDE satisfied by $v$ was used to obtain the final equality in the above calculation.

    In view of the assumed boundedness of $v$ and the fact that $|Z(t)| \le 1$,
    \[ \widetilde M(t) \triangleq \exp(\theta Z(t)) v(1-t, B(t)) \]
    is a martingale adapted to $(\mathcal F_t : t \ge 0)$. It follows that 
    \[ \E \exp(\theta Z(1)) v(0, B(1)) = v(1,0). \]
    The assumed boundary condition $v(0,x) = 1$ for $x \in \R$ then yields 
    \[ v(1,0) = \E \exp(\theta Z(1)), \]
    as required.
\end{proof}

We derive an additional PDE for the joint density of $(Z(1), B(1))$, taking advantage of the following fundamental scaling relationship.

\begin{prop} \label{prop2}
    For $t \ge 0$,
    \[ (Z(t), B(t)) \overset{\mathcal D}{=} (t Z(1), t^{1/2} B(1)),\] 
    where $\overset{\mathcal D}{=}$ denotes equality in distribution.
\end{prop}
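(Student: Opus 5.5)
Here $Z(t)=\int_0^t \ind(-z\le B(s)/\sqrt s\le z)\,\dee s$ is the unnormalized occupation time used in Section \ref{sec:3}. The plan is to use Brownian scaling at the level of whole paths and then note that the pair $(Z(t),B(t))$ is a measurable functional of the path. The case $t=0$ is trivial, since both sides equal $(0,0)$, so fix $t>0$.

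Define the rescaled process $\tilde B(s) = t^{-1/2} B(ts)$ for $s \ge 0$. By Brownian scaling, $\tilde B$ is again a standard Brownian motion. Equivalently, $(B(ts): s\ge 0) \overset{\mathcal D}{=} (t^{1/2} B(s) : s \ge 0)$ as processes on $C[0,\infty)$.

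Next, substitute $s = tu$ in the integral defining $Z(t)$:
\[ Z(t) = \int_0^t \ind\Big(-z \le \frac{B(s)}{\sqrt s} \le z\Big)\dee s = t\int_0^1 \ind\Big(-z \le \frac{B(tu)}{\sqrt{tu}} \le z\Big)\dee u = t\int_0^1 \ind\Big(-z \le \frac{\tilde B(u)}{\sqrt{u}} \le z\Big)\dee u . \]
The last step uses $B(tu)/\sqrt{tu} = \tilde B(u)/\sqrt u$. We also have $B(t) = t^{1/2}\tilde B(1)$. Hence
\[ (Z(t),B(t)) = \Psi(\tilde B), \qquad \Psi(x) = \Big(t\int_0^1 \ind(-z\le x(u)/\sqrt u\le z)\,\dee u,\; t^{1/2}x(1)\Big), \]
while $(tZ(1), t^{1/2}B(1)) = \Psi(B)$.

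Since $\tilde B \overset{\mathcal D}{=} B$ on path space, it follows that $\Psi(\tilde B)\overset{\mathcal D}{=}\Psi(B)$, which is the claim. The only point needing care is that $\Psi$ is measurable on $C[0,\infty)$. The integrand $(u,x)\mapsto \ind(|x(u)|\le z\sqrt u)$ is jointly measurable in $(u,x)$, because evaluation $(u,x)\mapsto x(u)$ is continuous. Fubini then gives measurability of the integral in $x$. This is routine, so I expect no real obstacle; the substantive input is simply Brownian scaling.
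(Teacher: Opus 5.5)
Your proposal is correct and follows the paper's proof: substitute $s = tu$, then apply Brownian scaling jointly to both components. Writing the pair as a single path functional $\Psi(\tilde B)$, together with your measurability remark, just makes rigorous the equality-in-distribution step that the paper performs in one line.
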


\begin{proof}
    For $t \ge 0$,
    \begin{align*}
    (Z(t), B(t)) &= \left( \int_0^t \ind(-z \le \frac{B(s)}{\sqrt s} \le z) \dee{s}, B(t) \right) \\
    &= \left( \int_0^1 \ind(-z \le \frac{B(tu)}{\sqrt {tu}} \le z)\,t \dee{u}, B(t) \right) \\
    &\overset{\mathcal D}{=} \left( \int_0^1 \ind(-z \le \frac{B(u)}{\sqrt {u}} \le z)\, t \dee{u}, \sqrt t \, B(1) \right) \\
    &= (t Z(1), t^{1/2} B(1)).
\end{align*}
\end{proof}

If $(g_t(x,y): x, y \in \R)$ is the joint density of $(Z(t), B(t))$, Proposition \ref{prop2} implies that 
\begin{equation} \label{eqn:joint_density}
g_t(x,y) = t^{-3/2} g_1(x/t, y/\sqrt t)
\end{equation} 
for $t > 0$, $x, y \in \R$. For $\epsilon \in (0,1)$, 
\begin{align*}
    g_1(x,y) \dee x \dee y &= \int_{\R^2} \p(Z(1) \in \dee x, B(1) \in \dee y \mid Z(1-\epsilon) = u_1, B(1-\epsilon) = u_2) \\
    & \qquad \quad \cdot g_{1-\epsilon}(u_1, u_2) \dee{u_1} \dee{u_2}.
\end{align*}

Since $Z(1) = Z(1-\epsilon) + \epsilon + o(\epsilon)$ when $-z < B(1) < z$, while $Z(1) = Z(1-\epsilon) + o(\epsilon)$ when $|B(1)| > z$, this suggests that 
\begin{align*}
    g_1(x,y) \dee y &= \int_{\R} \p(B(1) \in \dee y \mid B(1-\epsilon) = u_2) \\
    & \qquad \quad \cdot [g_{1-\epsilon}(x-\epsilon, u_2) + o(\epsilon) ] \dee{u_2}
\end{align*}
for $|y| < z$, and 
\begin{align*}
    g_1(x,y) \dee y &= \int_{\R} \p(B(1) \in \dee y \mid B(1-\epsilon) = u_2) \\
    & \qquad \quad \cdot [g_{1-\epsilon}(x, u_2) + o(\epsilon) ] \dee{u_2}
\end{align*}
for $|y| > z$. Because \eqref{eqn:joint_density} holds, and $B(1) - B(1-\epsilon) \overset{\mathcal D}{=} \sqrt \epsilon \mathcal N(0,1)$, it follows that
\begin{align*}
    g_1(x,y) &= \int_{\R} \phi ((y - u_2) / \sqrt \epsilon) \epsilon^{-1/2} \, g_{1-\epsilon} (x - \epsilon \ind(-z \le y \le z), u_2) \dee{u_2} + o(\epsilon) \\
    &= \int_{\R} \phi(w) \, g_{1-\epsilon}(x - \epsilon \ind(-z \le y \le z), y - w \sqrt \epsilon) \dee w + o(\epsilon) \\
    &= \int_{\R} \phi(w) (1 - \epsilon)^{-3/2} g_1 \left( \frac{x - \epsilon I (-z \le y \le z)}{1-\epsilon}, \frac{y - w \sqrt \epsilon}{\sqrt{1-\epsilon}} \right) \dee w + o(\epsilon).
\end{align*}
Furthermore, if $g_1$ is twice continuously differentiable with bounded derivatives, then
\begin{align*}
g_1(x,y) &= \int_{\R} \phi(w) (1 + \tfrac32 \epsilon + o(\epsilon)) \\
&\qquad \quad \cdot g_1((x-\epsilon \ind(-z \le y \le z)) (1 + \epsilon) + o(\epsilon), (y - w \sqrt \epsilon  ) (1 + \tfrac\epsilon2)+ o(\epsilon)) \dee{w} + o(\epsilon) \\
&= \int_\R  \phi(w) (1 + \tfrac32 \epsilon) \Big[ g_1 (x,y) + \frac{\partial g_1}{\partial x} (x,y) (x \epsilon - \epsilon \ind(-z \le y \le z)) \epsilon \\
&\qquad \qquad \qquad \qquad \qquad \qquad \quad + \frac{\partial g_1}{\partial y}(x,y) (\frac{\epsilon}{2} \, y - w \sqrt \epsilon) \\
&\qquad \qquad \qquad \qquad \qquad \qquad \quad + \frac{1}{2} \frac{\partial^2 g_1}{\partial y^2} (x,y) w^2 \epsilon + o(\epsilon) \Big]\dee{w} + o(\epsilon)\\
&= g_1(x,y) + \tfrac32 \epsilon g_1(x,y) + \frac{\partial g_1}{\partial x} (x,y) (x\epsilon - \epsilon \ind(-z \le y \le z)) \\ & \qquad + \frac{\partial g_1}{\partial y} (x,y) \frac\epsilon2 y  + \frac12 \frac{\partial^2 g_1}{\partial y^2} (x,y) \epsilon + o(\epsilon).
\end{align*}
Hence, $g_1$ should satisfy the PDE 
\begin{equation} \label{pde}
    0 = \frac32 g_1(x,y) + \frac{\partial g_1}{\partial x} (x,y) (x - \ind(-z \le y \le z)) + \frac{\partial g_1}{\partial y} (x,y) \frac{y}{2} + \frac12 \frac{\partial^2 g_1}{\partial y^2} (x,y),
\end{equation}
for $0 \le x \le 1$, $y \in \R$, subject to $g_1 \ge 0$ and
\begin{equation}
    \label{3.2}
    \int_0^1 \int_{\R} g_1(x,y) \dee y \dee x = 1.
\end{equation}
Note that as a consequence of our use of Proposition \ref{prop2} in the above derivation, the validity of the PDE \eqref{pde} is predicated on the initial conditions $Z(0) = 0$, $B(0) = 0$.

We now establish that the distribution of $(Z(1), B(1))$ is identical to the stationary distribution of a Markov process $((\chi(t), \beta(t)) : t \ge 0)$ having \textit{stationary} transition probabilities. Note that Proposition \ref{prop2} suggests considering the process $((t^{-1} Z(t), t^{1/2} B(t)) : t \ge 0)$. This latter process, while Markov, clearly exhibits non-stationary transition probabilities.

The connection to a process having stationary transition structure arises as a consequence of a deterministic exponential time change. Specifically, put 
\begin{equation*}
    \beta(t) = \frac{B(e^t)}{e^{t/2}}
\end{equation*}
and 
\begin{equation*}
    \chi(t) = e^{-t} Z(e^t)
\end{equation*}
for $t \in \R$. 

In stating the next result, we use the notation $\eqd$ to denote equality in distribution. 

\begin{thm}~ \label{thm 2}
    \begin{enumerate}[label=\alph*)]
        \item \label{thm 2 a}
    The process $((\chi(t), \beta(t)) : -\infty < t < \infty)$ is a stationary solution of the two-dimensional stochastic differential equation (SDE) 
    \begin{equation} \label{7}
        \begin{split}
            \dee \chi(t) &= -\chi(t) \dee t + \ind(-z \le \beta(t) \le z) \dee t, \\
            \dee \beta(t) &= -\frac{\beta(t)}{2} + \dee \widetilde B(t),
        \end{split}
    \end{equation}
    where $(\widetilde B(t) : t \ge 0)$ is a standard Brownian motion. 
    \item \label{thm 2 b} $Z(1) \eqd \int_0^\infty e^{-u} \ind(-z \le \beta(u) \le z) \dee u$, where $(\beta(t) : -\infty < t < \infty)$ is a stationary Ornstein-Uhlenbeck process satisfying \eqref{7}.
    \end{enumerate}
\end{thm}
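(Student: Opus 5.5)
The plan is to work directly from the definitions $\beta(t)=e^{-t/2}B(e^t)$ and $\chi(t)=e^{-t}Z(e^t)$, with $B$ a standard Brownian motion on $[0,\infty)$.

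\textbf{Part a).} First I treat $\beta$. For each $t$ the variable $\beta(t)$ is centered Gaussian. For $s\le t$,
\[
\E\beta(s)\beta(t)=e^{-(s+t)/2}e^{s}=e^{-(t-s)/2},
\]
which depends only on $t-s$. So $\beta$ is a stationary Gaussian process with the covariance of the stationary Ornstein--Uhlenbeck process with drift coefficient $1/2$ and unit diffusion coefficient. To get the SDE itself, I apply It\^o's formula to $e^{-t/2}B(e^t)$:
\[
\dee\beta(t)=-\tfrac12\beta(t)\,\dee t+e^{-t/2}\,\dee B(e^t).
\]
Set $\widetilde B(t)=\int_{0}^{t}e^{-s/2}\,\dee B(e^s)$, extended to $t<0$ by the same formula with orientation reversed. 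This is a continuous martingale in the time-changed filtration $\mathcal F_{e^t}$. Its quadratic variation is $\int_{0}^{t} e^{-s}\,\dee(e^s)=t$, so by L\'evy's characterization $\widetilde B$ is a standard Brownian motion (two-sided). This yields the second equation of \eqref{7}; the $\dee t$ missing there is a typo.

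Next I treat $\chi$. Since $Z$ is absolutely continuous with $Z'(s)=\ind(-z\le B(s)/\sqrt s\le z)$, the chain rule gives
\[
\frac{d}{dt}\bigl(e^{-t}Z(e^t)\bigr)=-e^{-t}Z(e^t)+e^{-t}e^t\,\ind\bigl(-z\le B(e^t)/e^{t/2}\le z\bigr).
\]
The right side equals $-\chi(t)+\ind(-z\le\beta(t)\le z)$, which is the first equation of \eqref{7}.

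For joint stationarity of $(\chi,\beta)$, I need the law of the shifted process $(\chi(t+h),\beta(t+h))_t$ to match that of $(\chi(t),\beta(t))_t$. Put $c=e^h$ and use Brownian scaling $\widehat B(s)=c^{-1/2}B(cs)$, which is again a standard Brownian motion. Then $\beta(t+h)=e^{-t/2}\widehat B(e^t)$. Likewise
\[
e^{-(t+h)}Z(e^{t+h})=e^{-t}\int_0^{e^t}\ind\bigl(-z\le \widehat B(u)/\sqrt u\le z\bigr)\,\dee u,
\]
by the substitution $s=cu$, exactly as in Proposition~\ref{prop2}. So the shifted pair is the same measurable functional of $\widehat B$ as the original pair is of $B$, which gives equality of all finite-dimensional (indeed path) laws.

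\textbf{Part b).} I solve the linear ODE for $\chi$ backward from time $0$:
\[
\chi(0)=e^{-T}\chi(-T)\cdot e^{0}+\int_{-T}^{0}e^{s}\,\ind(-z\le\beta(s)\le z)\,\dee s .
\]
Since $0\le\chi\le1$, the first term vanishes as $T\to\infty$. This gives $Z(1)=\chi(0)=\int_0^\infty e^{-u}\ind(-z\le\beta(-u)\le z)\,\dee u$. A direct check also works: substituting $s=e^{-u}$ in $\int_0^1\ind(\cdot)\,\dee s$ gives the same expression.

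Finally, the stationary OU process is time-reversible, since its covariance $e^{-|t-s|/2}$ is symmetric and the process is Gaussian. Hence $(\beta(-u))_{u\ge0}\eqd(\beta(u))_{u\ge0}$, and the representation in b) follows.

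The main obstacle is mild. It is the rigorous construction of $\widetilde B$ on all of $\R$ and the identification of the filtration, so that \eqref{7} holds as an SDE driven by a two-sided Brownian motion. One can sidestep this by working on $[t_0,\infty)$ for each $t_0$ and letting $t_0\to-\infty$.
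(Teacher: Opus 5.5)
Your proposal is correct and follows essentially the same route as the paper: you identify $\beta$ as the stationary OU process through its covariance $e^{-|t-s|/2}$, differentiate $\chi(t)=e^{-t}Z(e^t)$ to get its equation, write $Z(1)$ as $\int_0^\infty e^{-u}\ind(-z\le\beta(-u)\le z)\,\dee u$ via the exponential time change, and finish with time-reversibility of the stationary OU process. Your additions tighten steps the paper treats only briefly: you construct $\widetilde B$ explicitly via L\'evy's characterization, you prove joint stationarity of $(\chi,\beta)$ by Brownian scaling, and you evaluate at $t=0$, so that $Z(1)=\chi(0)$ holds almost surely rather than through $\chi(1)\eqd Z(1)$.
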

\begin{proof}
    We observe that $(e^{-t/2} B(e^t) : -\infty < t < \infty)$ is a Gaussian process with mean 0 and covariance function given by 
    \begin{align*}
        c(t, t+u) &= \cov(e^{-t/2} B(e^t), e^{-(t+u)/2} B(e^{t+u})) \\
        &= e^{-t} e^{-u/2} \cov(B(e^t), B(e^{t+u})) \\
        &= e^{-u/2},
    \end{align*}
    which is the covariance function of a stationary Ornstein-Uhlenbeck process satisfying the stated SDE for $(\beta(t): t \ge 0)$; see for example p.~137 of \cite{arnold1974stochastic}. In addition,
    \begin{align}
        \chi(t) &= e^{-t} \int_0^{e^t} \ind(-z \le \frac{B(s)}{\sqrt s} \le z) \dee s \nonumber \\
        &= e^{-t} \int_{-\infty}^t \ind(-z \le \frac{B(e^u)}{e^{u/2}} \le z) e^u \dee u \label{8}
    \end{align}
    so 
    \begin{align*}
        \dee \chi(t) &= -e^{-t} \left( \int_{-\infty}^t \ind(-z \le \frac{B(e^u)}{e^{u/2}} \le z) e^u \dee u \right)\dee t + \ind(-z \le \frac{B(e^t)}{e^{t/2}} \le z) \dee t \\ 
        &= -\chi(t) \dee t + \ind(-z \le \beta(t) \le z) \dee t.
    \end{align*}
    For part b), we note that \eqref{8} implies that 
    \begin{align*}
        \chi(1) &= \int_{-\infty}^1 e^{-(1-u)} \ind(-z \le \beta(u) \le z) \dee u \\
        &= \int_1^\infty e^{-(u-1)} \ind(-z \le \beta(1-u) \le z) \dee u \\
        &\eqd \int_0^\infty e^{-u} \ind(-z \le \beta(u) \le z) \dee u,
    \end{align*}
    where the equality in distribution follows from the standard fact that a stationary Ornstein-Uhlenbeck process is reversible; hence $(\beta(t) : t \ge 1) \eqd (\beta(1-t) : 0 \le t \le 1)$; see pp.~104--116 of \cite{pavliotis2014stochastic}. Finally, note that $\chi(1) \eqd Z(1)$ for $t \in \R$. 
\end{proof}

Part \ref{thm 2 b} of Theorem \ref{thm 2} provides an alternative representation of $Z(1)$ in terms of an Ornstein-Uhlenbeck process. We use part \ref{thm 2 a} as a vehicle for constructing a numerical procedure for computing $g_1$ in Section \ref{sec:4}.

\begin{remark}
    In \cite{breiman1967first}, the first exit time of Brownian motion from a square-root boundary was investigated via the same transformation $e^{-t/2} B(e^t)$. Theorem \ref{thm 2} shows that this transformation is also useful to derive information about the occupation time of Brownian motion within a square-root boundary. 
\end{remark}

Our final result is a two-dimensional PDE that provides the entire MGF of $Z(1)$ when solved (as opposed to part \ref{prop1:b} of Proposition \ref{prop1}, which yields the MGF only at a single $\theta$).

\begin{prop}
    Suppose that $u: \R_+ \times \R$ is a bounded, twice continuously differentiable function that satisfies the PDE 
    \begin{equation*}
        -x \ind(-z \le y \le z) u(x,y) - x \frac{\partial u}{\partial x} (x,y) - \frac{y}{2} \frac{\partial u}{\partial y} (x,y) + \frac{1}{2} \frac{\partial^2 u}{\partial y^2} (x,y) = 0,
    \end{equation*}
    for $x \ge 0$, $y \in \R$, subject to $u(0,y) \equiv1 $, for $y \in \R$. Then, 
    \begin{equation*}
        u(\theta, y) = \E [ \exp(-\theta \int_0^\infty e^{-u} \ind(-z \le \chi(u) \le z) \dee u) \mid X(0) = y],
    \end{equation*}
    for $\theta \ge 0$, $y \in \R$.
\end{prop}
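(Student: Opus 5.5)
The plan is a Feynman--Kac argument for the Ornstein--Uhlenbeck component $\beta$ of Theorem \ref{thm 2}, with a deterministic, exponentially decaying ``clock'' in the first argument of $u$.

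I read the statement with the evident typos corrected: the indicator is $\ind(-z \le \beta(u) \le z)$ and the conditioning is on $\beta(0) = y$. Here $\beta$ solves $\dee \beta(t) = -\tfrac{\beta(t)}{2}\dee t + \dee \widetilde B(t)$ as in \eqref{7}, started at $y$ rather than in stationarity. Fix $\theta \ge 0$ and put
\[ A(t) = \int_0^t e^{-s}\ind(-z \le \beta(s) \le z)\dee s, \qquad M(t) = \exp(-\theta A(t))\, u(\theta e^{-t}, \beta(t)). \]

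First, apply It\^{o}'s formula to $M$, exactly as in part \ref{prop1:b} of Proposition \ref{prop1}. The exponential factor contributes $-\theta e^{-t}\ind(-z\le\beta(t)\le z)\,u\,\dee t$. The first argument $x(t) = \theta e^{-t}$ satisfies $\dee x = -x\,\dee t$, so it contributes $-x\,\partial u/\partial x\,\dee t$. The $\beta$-component contributes $(-\tfrac{\beta}{2}\partial u/\partial y + \tfrac12 \partial^2 u/\partial y^2)\dee t + \partial u/\partial y\,\dee\widetilde B$. Evaluated at $(x,y) = (\theta e^{-t}, \beta(t))$, the $\dee t$ terms sum to exactly the left-hand side of the stated PDE, which vanishes. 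Hence $\dee M(t) = e^{-\theta A(t)}\,\partial u/\partial y(\theta e^{-t},\beta(t))\,\dee\widetilde B(t)$, and $M$ is a local martingale. Since $|e^{-\theta A(t)}| \le 1$ and $u$ is bounded, $M$ is a bounded local martingale and therefore a true martingale. This gives $u(\theta,y) = M(0) = \E_y M(t)$ for every $t \ge 0$.

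Next, let $t\to\infty$. Because $A(t) \uparrow A(\infty) = \int_0^\infty e^{-u}\ind(-z\le\beta(u)\le z)\dee u \le 1$, we have $e^{-\theta A(t)} \to e^{-\theta A(\infty)}$ surely. It remains to show that $u(\theta e^{-t}, \beta(t)) \to u(0,\beta(t)) = 1$ in a sense strong enough to pass to expectations.

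This limit is the main obstacle, since we only know that $u$ is continuous and bounded, not uniformly continuous in $x$ uniformly over all $y$. I would handle it using tightness of $\beta(t)$. Started at $y$, the law of $\beta(t)$ is $\mathcal N(ye^{-t/2}, 1-e^{-t})$, so for any $\eta>0$ there is a compact $K$ with $\p_y(\beta(t)\notin K) < \eta$ for all $t \ge 0$. On the compact set $[0,\theta]\times K$, $u$ is uniformly continuous, so $\sup_{y'\in K}|u(\theta e^{-t},y') - 1| \to 0$. Splitting the expectation on $\{\beta(t)\in K\}$ and its complement, and using boundedness of $u$ on the complement, gives
\[ \limsup_t \E_y\bigl|e^{-\theta A(t)}\bigl(u(\theta e^{-t},\beta(t)) - 1\bigr)\bigr| \le 2\|u\|_\infty \eta. \]
Since $\eta$ is arbitrary, this term vanishes in the limit. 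Bounded convergence then yields $\E_y M(t) \to \E_y e^{-\theta A(\infty)}$. Combined with $\E_y M(t) = u(\theta,y)$, this is the claimed identity.

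As a consequence, integrating $y$ against the stationary $\mathcal N(0,1)$ law and invoking part \ref{thm 2 b} of Theorem \ref{thm 2} recovers the Laplace transform of $Z(1)$ for all $\theta \ge 0$ at once.
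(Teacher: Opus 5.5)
Your proof is correct and follows the paper's own argument. You apply It\^o's formula to $e^{-\theta A(t)}u(\theta e^{-t},\beta(t))$, use the PDE to kill the drift, note that a bounded local martingale is a true martingale, and send $t\to\infty$ with bounded convergence, having repaired the statement's typos ($\chi$ and $X(0)$ in place of $\beta$ and $\beta(0)$) as intended. Your tightness argument, giving $u(\theta e^{-t},\beta(t))\to 1$ in probability, usefully sharpens the paper's bare appeal to bounded convergence: $\beta(t)$ is almost surely unbounded as $t\to\infty$, so continuity of $u$ alone does not immediately give almost sure convergence of that factor.
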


\begin{proof}
    Note that It\^o's formula ensures that 
    \begin{align*}
    & \dee (\exp(-\theta \int_0^t e^{-u} \ind(-z \le \chi(s) \le z) \dee s) u(\theta e^{-t}, \chi(t))) \\
    &= \exp\left(-\theta \int_0^t e^{-u} \ind(-z \le \chi(s) \le z) \dee s\right)  \\
    &\quad \cdot \Bigg\{ -\theta e^{-t} \ind(-z \le \chi(t) \le z) u(\theta e^{-t}, \chi(t)) \dee t \\
    &\quad~ - \theta e^{-t} \frac{\partial}{\partial x} u(\theta e^{-t}, \chi(t)) \dee t - \frac{\partial}{\partial y} u(\theta e^{-t}, \chi(t)) \dee t \Bigg\} \\
    &= \exp\left(-\theta \int_0^t e^{-u} \ind(-z \le \chi(s) \le z) \dee s\right) \frac{\partial}{\partial y} u(\theta e^{-t}, \chi(t)) \dee B(t).
    \end{align*}
    Because $u$ is bounded and satisfies the PDE, it follows that 
    \begin{equation*}
        M'(t) = \exp(-\theta \int_0^t e^{-u} \ind(-z \le \chi(s) \le z) \dee s) u(\theta e^{-t}, \chi(t))
    \end{equation*}
    is a martingale. Hence, 
    \begin{equation*}
        \E [ \exp(-\theta \int_0^t e^{-u} \ind(-z \le \chi(s) \le z) \dee s) u(\theta e^{-t}, \chi(t)) \mid \chi(0) = x] = u(\theta, x)
    \end{equation*}
    for $\theta \ge 0$ and $x \in \R$. Sending $t \to \infty$ and applying the bounded convergence theorem yields the result.
\end{proof}

To obtain the MGF of $Z(1)$, integrate $u(\theta, \cdot)$ against the standard normal density $\phi$.

\section[A Numerical Scheme for Solving the PDE for the Density of Z(1)]{A Numerical Scheme for Solving the PDE for the Density of $Z(1)$} \label{sec:4}
The PDE \eqref{pde} is difficult to discretize numerically because diffusion occurs in only one of the two dimensions. Consequently, it is difficult to obtain stable numerical solutions to \eqref{pde} when implementing conventional discretization schemes for numerical PDE solving. However, we do obtain stable numerical solutions that agree with Monte Carlo simulations of the density when we discretize using an appropriate approximating Markov chain. This is the approach followed by Kushner \cite{Kushner2001} in his development of numerical schemes for such singular diffusions. We now give a quick outline of the approach.

Note that if $f$ is a bounded, twice continuously differentiable function with bounded derivatives, then It\^o's formula ensures that 
\begin{equation} \label{9}
    f(\chi(t), \beta(t)) - \int_0^t (\mathcal L f)(\chi(s), \beta(s)) \dee s
\end{equation}
is a martingale adapted to the filtration of $((\chi(s), \beta(s)) : s \ge 0)$, where 
\begin{equation} \label{10}
    (\mathcal L f)(x,y) = -(x - \ind(-z \le y \le z)) \frac{\partial f}{\partial x} (x,y) - \frac{y}{2} \frac{\partial f}{\partial y} (x,y) + \frac{1}{2} \frac{\partial^2 f}{\partial y^2} (x,y).
\end{equation}
We now construct an approximating Markov chain $((\chi_i(\epsilon), \beta_i(\epsilon)) : i \ge 0)$ on an appropriate lattice, so that 
\begin{equation*}
    \E [f(\chi(\epsilon), \beta(\epsilon)) \mid \chi(0) = x, \beta(0) = y] \approx \E[f(\chi_1(\epsilon), \beta_1(\epsilon)) \mid \chi_0(\epsilon) = x, \beta_0(\epsilon) = y]
\end{equation*}
for $\epsilon$ small. The martingale property \eqref{9} suggests the requirement that 
\begin{equation} \label{11}
    \E[ f(\chi_1(\epsilon), \beta_1(\epsilon)) \mid \chi_0(\epsilon) = x, \beta_0(\epsilon) = y ] \approx f(x,y) + \epsilon (\mathcal L f)(x,y).
\end{equation}
Suppose that we use a lattice $\{(j \gridx, k \gamma) : 0 \le j \le j_0, |k| \le k_0\}$ as the state space for our approximating Markov chain. Then, \eqref{10} and \eqref{11} require that the transition probabilities of $((\chi_1(\epsilon), \beta_i(\epsilon)) : i \ge 0)$ are chosen so that 
\begin{equation} \label{12}
    \begin{split}
        &\E[f(\chi_1(\epsilon), \beta_1(\epsilon)) \mid \chi_0(\epsilon) = j\gridx, \beta_0(\epsilon) = k\gamma] \\
        &=f(j\gridx, k\gamma) - \left[ \frac{f(j\gridx, k\gamma) - f((j-1) \gridx, k\gamma)}{\gridx} \right] j\gridx \epsilon \\
        &\qquad - \left[ \frac{f(j\gridx, (k+1) \gamma) - f(j\gridx, (k-1) \gamma)}{2\gamma} \right] \frac{k \gamma}{2} \epsilon \\
        &\qquad + \left[ \frac{f(j\gridx, (k+1) \gamma) - 2 f(j\gridx, k \gamma) + f(j\gridx, (k-1) \gamma)}{\gamma^2} \right] \frac{1}{2} \epsilon
    \end{split}
\end{equation}
for $|k \gamma| > z$. A key aspect of this construction is the use of a backward difference to approximate $\frac{\partial f}{\partial x}(x,y)$, rather than a forward or central difference. This choice avoids introducing negative coefficients to the function values at points other than $(j\gridx, k \gamma)$, as this would be inconsistent with a Markov chain approximation. Similarly, when $|k \gamma| \le z$, we require that 
\begin{equation} \label{13}
    \begin{split}
        &\E[f(\chi_1(\epsilon), \beta_1(\epsilon)) \mid \chi_0(\epsilon) = j\gridx, \beta_0(\epsilon) = k\gamma] \\
        &=f(j\gridx, k\gamma) + \left[ \frac{f((j+1)\gridx, k\gamma) - f(j \gridx, k\gamma)}{\gridx} \right] (1-j\gridx) \epsilon \\
        &\qquad - \left[ \frac{f(j\gridx, (k+1) \gamma) - f(j\gridx, (k-1) \gamma)}{2\gamma} \right] \frac{k \gamma}{2} \epsilon \\
        &\qquad + \left[ \frac{f(j\gridx, (k+1) \gamma) - 2 f(j\gridx, k \gamma) + f(j\gridx, (k-1) \gamma)}{\gamma^2} \right] \frac{1}{2} \epsilon,
    \end{split}
\end{equation}
wherein a forward difference, rather than a backward difference, is employed to preserve the non-negativity of the coefficients in the Markov chain approximation.

We now re-arrange \eqref{12} so that the right-hand side resembles an expectation over the points $(j\gridx, k \gamma)$, $((j-1) \gridx, k\gamma)$, $(j\gridx, (k+1)\gamma)$, and $(j\gridx, (k-1)\gamma)$:

\begin{equation*}
    \begin{split}
        &\E[f(\chi_1(\epsilon), \beta_1(\epsilon)) \mid \chi_0(\epsilon) = j \gridx, \beta_1(\epsilon) = k \gamma] \\
        &= (1 - j\epsilon - \epsilon/\gamma^2) f(j\gridx, k\gamma) + j\epsilon f((j-1)\gridx, k\gamma) \\
        &\quad+ (\epsilon/(2\gamma^2) - k\epsilon/4) f(j\gridx, (k+1)\gamma) + (\epsilon/(2\gamma^2) + k\epsilon/4) f(j\gridx, (k-1) \gamma).
    \end{split}
\end{equation*}
Note that the weights on the function values sum to 1, as required by a probability. To make the weights non-negative, we need to choose $\gridx$ and $\gamma$ so that $1 - \epsilon/\gridx - \epsilon/\gamma^2 \ge 0$ (since $j\gridx \in [0,1]$). The choice $\epsilon/\gridx = \epsilon/\gamma^2 = \tfrac12$ satisfies this requirement; equivalently, $\gridx = 2\epsilon$ and $\gamma = \sqrt{2 \epsilon}$. Because $j_0 \gridx = 1$ is the right endpoint of $[0,1]$, it is the case that $j_0$ and $\epsilon$ are related via $\epsilon = 1/(2j_0)$; hence $\gamma = \sqrt{1/j_0}$. To obtain a reasonable value for $k_0$, we approximate $P(|\mathcal N(0,1)| \ge 6) \approx 0$, so we set $k_0$ such that $\lfloor k_0 j_0^{-1/2} \rfloor = 6$. Hence, for $|k j_0^{-1/2}| > z$ and $|k| < k_0$, we use as our approximation a Markov chain with transition probabilities 
\begin{equation} \label{14}
    \begin{split}
    &P_\epsilon((j\gridx, k\gamma), (j\gridx, k\gamma)) = 1/2 - j\gridx / 2, \\
    &P_\epsilon((j\gridx, k\gamma), ((j-1)\gridx, k\gamma)) = j\gridx / 2, \\
    &P_\epsilon((j\gridx, k\gamma), (j\gridx, (k+1)\gamma)) = 1/4 - k\gridx / 8,\\
    &P_\epsilon((j\gridx, k\gamma), (j\gridx, (k-1)\gamma)) = 1/4 + k\gridx / 8.
    \end{split}
\end{equation}
When $|k| = k_0$, we move the mass associated with transitioning to $(j\gridx, (k_0 + 1) \gamma)$ or $(j\gridx, (-k_0-1)\gamma)$ to the self-transition taking the chain back to state $(j\gridx, k_0 \gamma)$ or $(j\gridx, -k_0\gamma)$, respectively. 

For $|kj_0^{-1/2}| \le z$, an analogous analysis leads to the transition probabilities 
\begin{equation} \label{15}
    \begin{split}
    &P_\epsilon((j\gridx, k\gamma), (j\gridx, k\gamma)) = 1/2 - (1-j\gridx) / 2, \\
    &P_\epsilon((j\gridx, k\gamma), ((j-1)\gridx, k\gamma)) = (1-j\gridx) / 2, \\
    &P_\epsilon((j\gridx, k\gamma), (j\gridx, (k+1)\gamma)) = 1/4 - k\gridx / 8,\\
    &P_\epsilon((j\gridx, k\gamma), (j\gridx, (k-1)\gamma)) = 1/4 + k\gridx / 8,
    \end{split}
\end{equation}
when $|k| < k_0$, with the same choice made at the boundary $|k| = k_0$ as was made for the case $|k j_0^{-1/2}| > z$ above. The relations \eqref{14} and \eqref{15} fully define the approximating Markov chain we use in our numerics. Note that the state space of this Markov chain has approximately $6j_0^{3/2}$ states.

\begin{remark}
    Note that when $|kj_0^{-1/2}| \le z$, the approximating Markov chain can only make moves in the positive $x$ direction, while if $|kj_0^{-1/2}| > z$, the chain can only make moves in the negative $x$-direction, as expected from the SDE for $\chi$ in Theorem \ref{thm 2}.
\end{remark}
\begin{remark}
    An interesting feature of this discretization, in addition to the particular choice of forward / backward differencing to compute $\frac{\partial f}{\partial x} (x,y)$, is the use of a much finer grid in the $x$-coordinate than in the $y$-coordinate. This is a consequence of the fact that diffusion occurs only in the $y$-direction, so a coarse grid can only be used in the $y$ coordinate.
\end{remark}

Having defined the Markov transition matrix by 
$$P_\epsilon = (P_\epsilon((j/j_0, kj_0^{-1/2}), (j'/j_0, k'j_0^{-1/2})) : 0 \le j, j' \le j_0, |k|, |k'| \le \lceil 6 j_0^{1/2} \rceil ),$$
we can compute the associated stationary distribution $\pi_\epsilon = (\pi_\epsilon(j/j_0, k j_0^{-1/2}) : 0 \le j \le j_0, |k| \le \lceil 6j_0^{1/2} \rceil)$ by solving 
\begin{equation} \label{16}
    \pi_\epsilon = \pi_\epsilon P_\epsilon
\end{equation}
subject to 
\begin{equation} \label{17}
    \sum_{j=0}^{j_0} \sum_{|k| \le \lceil 6j_0^{1/2} \rceil} \pi_\epsilon (j/j_0, kj_0^{-1/2}) = 1. 
\end{equation}
In Figure \ref{grid}, we use \eqref{16} and \eqref{17} to compute the density of $Z(1)$ at various values of $1-\level$ by solving the appropriate linear system directly via sparse LU factorization. These computations were performed with $j_0 = 10^4$, and the code can be found at \url{https://github.com/montefischer/ci_coverage}. Portions of the code were developed with the assistance of Google Gemini Pro (\url{https://gemini.google.com}) used between November 2025 and January 2026, and Claude Opus 5.5 used in September and October 2026 to support implementation, debugging, and plotting. All code was reviewed and validated by the authors. 

\begin{remark}
    When $((\chi(t), \beta(t)) : t \ge 0)$ is initialized with its stationary distribution $\pi$, the martingale property \eqref{9} ensures that 
    \begin{equation*}
        \int_0^1 \int_{\R} \pi(\rd x, \rd y) (\mathcal L f) (x,y) = 0
    \end{equation*}
    for all suitably smooth and bounded functions $f$. This is the \textit{basic adjoint relationship} (BAR) that characterizes $\pi$. Assuming that $\pi$ has a smooth density $(p(x,y) : x \in [0,1], y \in \R)$, a formal integration-by-parts argument (see, for example, p.~220 of \cite{karlin1981second}) shows that $p$ satisfies the PDE \eqref{pde}. 
\end{remark}
\begin{remark}
    Starting from \eqref{16}, it is straightforward to formally argue that as $\epsilon \downarrow 0$, $\pi_\epsilon$ should converge to a solution $p$ of \eqref{pde}. This is as expected, given that $\pi_\epsilon$ approximates the two-dimensional stationary density $p$, which must coincide with $g_1$.
\end{remark}

The family of densities corresponding to different choices of $\level$ exhibits a remarkable variety of behaviors, as Figure \ref{grid} shows. An observation of particular interest is the right-skewness of the density of $Z(1)$ for $\level > 0.5$. A statistical interpretation of this fact is that a typical sequence of conventional confidence intervals calibrated with typical choices of $(1-\level)$ will cover the true parameter for a larger than $(1-\level)$ proportion of the time. 

\begin{figure} 
    \includegraphics[width=\linewidth, alt={Nine panels showing the density of Z(1) on the interval from 0 to 1, one for each confidence level 1 minus delta from 0.10 to 0.95. At low levels the mass piles up near 0; near level 0.5 there is a spike at 0 and a broad hump near 0.8; at high levels the mass piles up near 1, so the median exceeds the mean, for example a median of 0.99 against a mean of 0.90 at level 0.90.}]{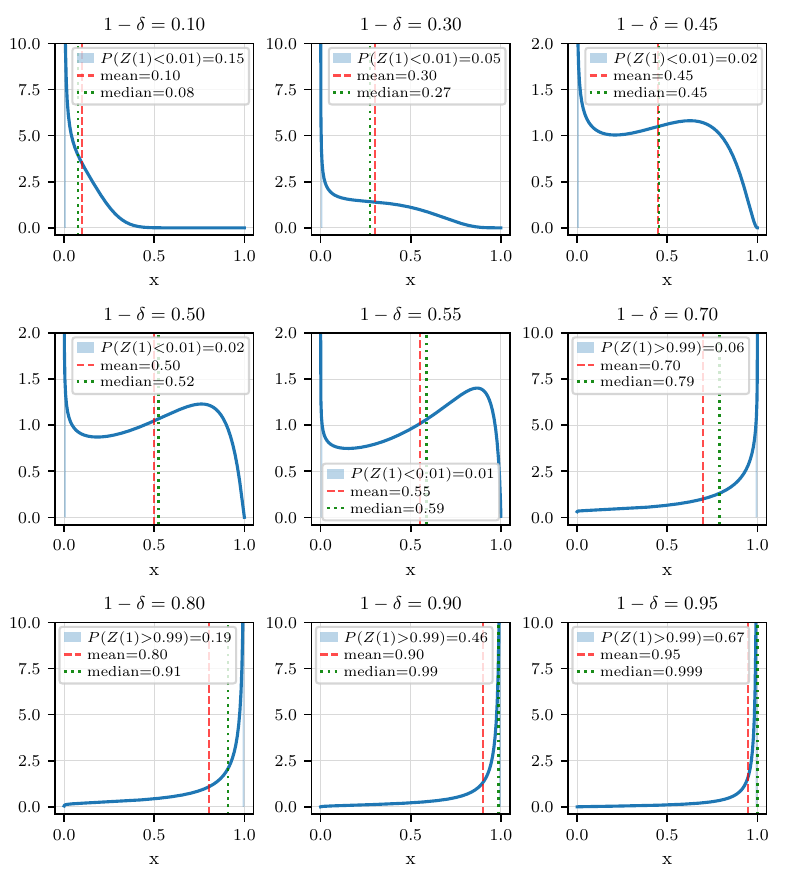}
    \caption{Density of $Z(1)$ for various values of $\level$ computed as the stationary distribution of the Markov chain associated with the PDE \eqref{pde}. Vertical axes are clipped at 2 or 10 to better exhibit the characteristic features of each density. Because densities exhibit asymptotic behavior near either $x=0$ or $x=1$, for $\level$ near $1$ or $0$, estimates of the quantities $P(Z(1) < 0.01)$ or $P(Z(1) > 0.99)$ as appropriate are included.}
    \label{grid}
\end{figure}

A numerical solution of the PDE \eqref{pde} for the joint density $g_1(x,y)$ of $(Z(1), B(1))$ also yields numerical solutions for the conditional density $g_1(y \mid x)$ of Brownian motion $B(1)$ given $Z(1)=x$. 
For $t \ne 1$, the conditional density of $B(t)$ is just a scaled version of that for $B(1)$ due to Proposition \ref{prop2}. 
We plot these conditional densities for various values of $\level$ and values of $Z(1)$ in Figure \ref{fig conditional densities}. The distributions are smooth and symmetric, and display an interesting variety of behaviors. When the coverage proportion is small relative to $\delta$ (as shown by $x = 0.1$ in Figure \ref{fig conditional densities}), the conditional distribution of $B(1)$ has two modes, each located a moderate distance outside of $[-z, z]$, as would be expected for Brownian paths that lie outside $[-z, z]$ with high probability. When the coverage proportion is high (as shown by $x = 0.9$ in Figure \ref{fig conditional densities}), the conditional density is largely concentrated within the region $[-z, z]$, and becomes bimodal as $\delta \downarrow 0$. 

\begin{figure}
    \includegraphics[width=\linewidth, alt={Five stacked panels, for confidence levels 0.20, 0.50, 0.80, 0.90 and 0.95, each showing the conditional density of B(1) given Z(1) equal to 0.1, 0.5 and 0.9, with vertical lines at plus and minus z. Given 0.1 the density has two modes just outside plus and minus z; given 0.9 it is concentrated between minus z and z, flattening as the level rises; given 0.5 it is intermediate.}]{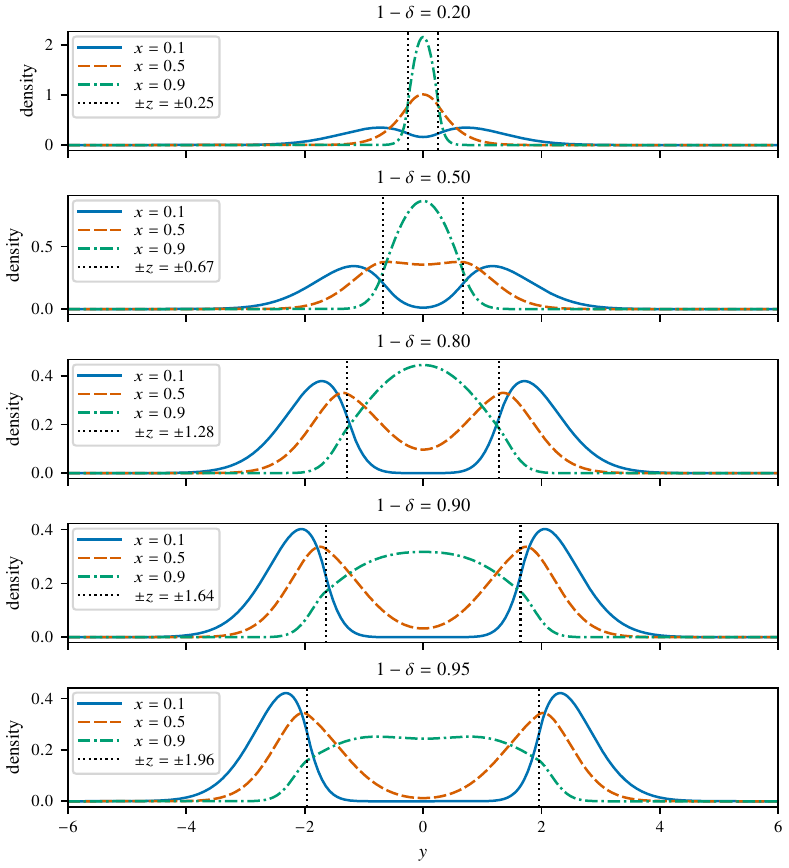}
    \caption{Conditional density $g_1(\cdot \mid x)$ of $B(1)$ given a specified occupation time $Z(1)=x$ of Brownian motion in the region with square root boundary associated with the confidence level $1-\level$. Conditional densities plotted for several choices of $\level$, with $x \in \{0.1, 0.5, 0.9\}$.}
    \label{fig conditional densities}
\end{figure}

Monte Carlo simulation supplements the PDE approach by allowing the study of paths of $B$ as a function of $Z(1)$.  We discretize $(t_i : 0 \le i \le n)$ such that $0 = t_0 < t_1 < \cdots < t_{n-1} < t_n = 1$, and simulate $(B(t_i) : 0 \le i \le n)$ as the cumulative sum of independent normal random variables with mean 0 and variances $(t_i - t_{i-1})$, $i = 1, \dots, n$. Then, we let 
\[Z_n(1) = \sum_{i=1}^n \ind(-v \le \frac{B(t_i)}{\sqrt{t_i}} \le v ) (t_i - t_{t-1}). \]
As the mesh size of the discretization decreases to zero, that is, $\max\{t_{i+1} - t_i : 1 \le i \le n-1\} \downarrow 0$, it holds that $Z_n(1)$ converges in distribution to $Z(1)$. 

Taking $m =10^5$ i.i.d.~samples of $Z_n(1)$ for various values of $\level \in (0, 1)$, with $n=10^5$ evenly spaced time-discretization points each, we tabulate several quantities of interest regarding $Z(1)$ in Table \ref{tab:statistics}. These simulation estimates confirm the interesting property of $Z(1)$ that for $\level < \tfrac12$, a significant majority of paths exceed the expected coverage proportion $\E Z(1)$. For example, in expectation, the proportion of time that a sequence of 90\% confidence intervals ($\level = 0.1$) covers the true parameter is, of course, 90\%, but with probability roughly $\tfrac34$, the time-averaged coverage will exceed 90\%, and with probability $\tfrac12$, the time-averaged coverage of the sequence will exceed 98.5\%. This phenomenon is reversed for confidence intervals calibrated to have low coverage (as $\level \to 1$), which is an atypical choice in the statistics context. 

\begin{table}[h]
\centering
\begin{tabular}{l|rrrr}
\toprule
$\bm{\level}$ & $\E Z(1)$ & $\text{median}( Z(1))$ & $\sqrt{\var Z(1)}$ & $\p(Z(1) > 1-\level)$ \\
\midrule
0.01 & $0.9900 \pm 0.0003$ & $0.9999 \pm 0.0001$ & $0.0528 \pm 0.0016$ & $0.9136 \pm 0.0017$ \\
0.05 & $0.9502 \pm 0.0008$ & $0.9991 \pm 0.0001$ & $0.1262 \pm 0.0016$ & $0.8078 \pm 0.0024$ \\
0.10 & $0.9001 \pm 0.0011$ & $0.9860 \pm 0.0003$ & $0.1796 \pm 0.0016$ & $0.7401 \pm 0.0027$ \\
0.30 & $0.6997 \pm 0.0017$ & $0.7901 \pm 0.0023$ & $0.2747 \pm 0.0011$ & $0.6021 \pm 0.0030$ \\
0.45 & $0.5497 \pm 0.0018$ & $0.5894 \pm 0.0029$ & $0.2837 \pm 0.0008$ & $0.5401 \pm 0.0031$ \\
0.50 & $0.4998 \pm 0.0017$ & $0.5224 \pm 0.0029$ & $0.2780 \pm 0.0008$ & $0.5230 \pm 0.0031$ \\
0.55 & $0.4499 \pm 0.0017$ & $0.4567 \pm 0.0028$ & $0.2683 \pm 0.0008$ & $0.5075 \pm 0.0031$ \\
0.70 & $0.2997 \pm 0.0013$ & $0.2744 \pm 0.0022$ & $0.2157 \pm 0.0007$ & $0.4637 \pm 0.0031$ \\
0.90 & $0.0998 \pm 0.0006$ & $0.0766 \pm 0.0008$ & $0.0897 \pm 0.0004$ & $0.4143 \pm 0.0031$ \\
0.95 & $0.0499 \pm 0.0003$ & $0.0365 \pm 0.0004$ & $0.0473 \pm 0.0003$ & $0.4013 \pm 0.0030$ \\
0.99 & $0.0100 \pm 0.0001$ & $0.0070 \pm 0.0001$ & $0.0099 \pm 0.0001$ & $0.3914 \pm 0.0030$ \\

\bottomrule
\end{tabular}
\caption{Monte Carlo estimates of statistics for the limiting coverage RV $Z(1)$, for $(1-\level)$ confidence intervals. Statistics estimated using $10^5$ i.i.d.~Brownian paths, simulated with step size $10^{-5}$. For the estimate of the median when $\delta = 0.01$, $10^4$ paths with step size $10^{-6}$ were used instead. We report 95\% confidence intervals for the corresponding moments.}
\label{tab:statistics}
\end{table}

We focus our attention on the $1-\level = 0.95$ case, corresponding to the last plot in Figure \ref{fig conditional densities}. For each of the values $x \in \{0.1, 0.5, 0.9\}$ in this plot, we simulate Brownian sample paths conditional on $Z(1) \in [x-0.01, x]$. The resulting conditional paths are displayed in Figure \ref{fig conditional paths}. These simulated paths reveal a variety of interesting qualitative behaviors. For example, Brownian paths conditional on low occupation times of about 0.1 lie well outside $[-z \sqrt t, z \sqrt t]$ for most of the interval. The visits to $[-z \sqrt t, z \sqrt t]$ tend to occur early, near $t=0$, as evidenced by the sample paths conditional on $Z(1) \in [0.09, 0.1]$ in Figure \ref{fig conditional paths}. This behavior is not as strong for Brownian paths conditional on `moderate' occupation times $(Z(1) \in [0.49, 0.5]$); rather, these paths appear to cross in and out of the square root boundary region across the entire interval $[0,1]$. For paths conditioned to have `high' occupation times ($Z(1) \in [0.945, 0.955]$), excursions outside the square root boundary tend to concentrate near $t=0$. In each case, the shape of the conditional density of $B(1) \mid Z(1)$ from Figure \ref{fig conditional densities} is approximated by the terminal states occupied by the conditional paths, providing a simulation-based confirmation of our numerical solution for the joint density $g_1$. 

\begin{figure}
    \centering
    \includegraphics[width=0.75\linewidth, alt={100 simulated Brownian paths on the time interval from 0 to 1 with occupation Z(1) between 0.09 and 0.10. After a short time near 0 the paths split into an upper and a lower bundle that stay outside the dashed boundary plus or minus 1.96 times the square root of t.}]{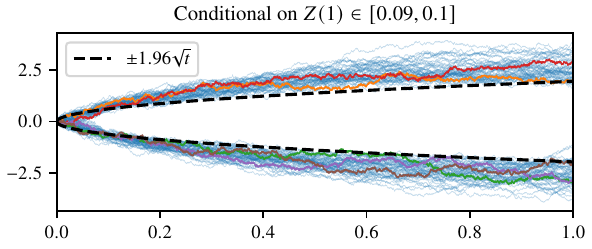}
    \includegraphics[width=0.75\linewidth, alt={100 simulated Brownian paths with occupation Z(1) between 0.49 and 0.50. The paths split into upper and lower bundles that track the dashed boundary plus or minus 1.96 times the square root of t, crossing in and out of it throughout the interval.}]{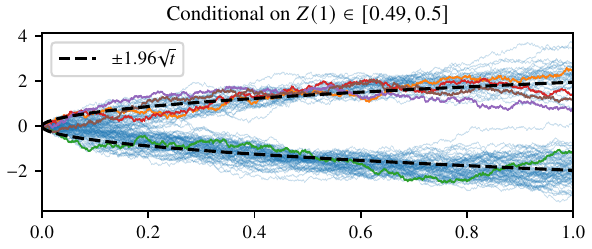}
    \includegraphics[width=0.75\linewidth, alt={100 simulated Brownian paths with occupation Z(1) between 0.945 and 0.955. The paths stay mostly inside the dashed boundary plus or minus 1.96 times the square root of t, with brief excursions outside it.}]{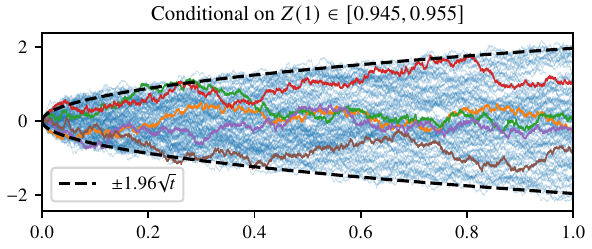}
    \includegraphics[width=0.75\linewidth, alt={100 simulated Brownian paths with occupation Z(1) between 0.99 and 1. The paths stay almost entirely inside the dashed boundary plus or minus 1.96 times the square root of t for the whole interval.}]{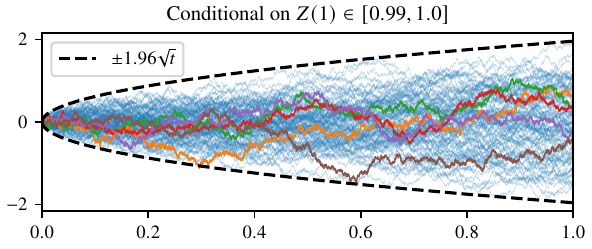}
    \caption{Paths of Brownian motion conditional on having occupation $Z(1)$ within $[0.09, 0.1]$, $[0.49, 0.50]$, $[0.945, 0.955]$, and $[0.99, 1]$ (top to bottom), for $1-\level=0.95$. A total of 100 conditional paths per plot are shown, with 5 randomly selected paths highlighted. A discretization time step of $10^{-4}$ was used to simulate i.i.d.~copies of Brownian motion, and conditional samples were produced by simple acceptance-rejection.}
    \label{fig conditional paths}
\end{figure}

\section{Extension to Confidence Regions} \label{sec multivariate}

We now briefly discuss the extension from the setting of confidence intervals to confidence regions. Suppose, in particular, that $\alpha_d(t) \in \R^d$ (interpreted as a column vector) is an estimator for the $d$-dimensional parameter $\alpha_d$ satisfying:
\begin{assumption} \label{a3}
    There exists a positive definite symmetric matrix $C$ such that 
    \begin{equation*}
        (\epsilon^{-1/2} (\alpha_d(t/\epsilon) - \alpha_d) : t > 0) \Rightarrow (C^{1/2} B_d(t) : t > 0)
    \end{equation*}
    as $\epsilon \downarrow 0$ in $D_d(0, \infty)$, where $B_d = (B_d(t) : t \ge 0)$ is a $d$-dimensional standard Brownian motion and $D_d(0, \infty)$ is the $d$-dimensional version of $D(0, \infty)$. 
\end{assumption}
\begin{assumption} \label{a4}
    There exists an estimator $v_d(t)$ such that 
    \begin{equation*}
        v_d(t) \to C \quad \as
    \end{equation*}
    as $t \to \infty$.
\end{assumption}
For $\delta \in (0,1)$, choose $z$ so that $\p(B_d(1)^\top B_d(1) \le z^2) = 1-\delta$, and put 
\begin{equation*}
    \mathscr P(t) = \frac{1}{t} \int_0^t \ind((\alpha_d(s) - \alpha)^\top v_d(s) (\alpha_d(s) - \alpha) \le z^2) \dee s.
\end{equation*}
\begin{thm}
    Under \ref{a3} and \ref{a4}, 
    \begin{equation*}
        \mathscr P_d(t) \Rightarrow Z_d(1)
    \end{equation*}
    as $t \to \infty$, where 
    \begin{equation*}
        Z_d(t) = \int_0^t \ind(B_d(s)^\top B_d(s) \le z^2) \dee s.
    \end{equation*}
    Furthermore,
    \begin{equation*}
        Z_d(1) \eqd \int_0^\infty e^{-u} \ind(\chi_d(u)^\top \chi_d(u) \le z^2) \dee u,
    \end{equation*}
    where $\chi_d = (\chi_d(u) : u \ge 0)$ is a $d$-dimensional Ornstein-Uhlenbeck process satisfying the SDE 
    \begin{equation*}
        \dee \chi_d(t) = -\frac{\chi_d(t)}{2} \dee t + \dee \widetilde B_d(t),
    \end{equation*}
    where $(\widetilde B_d(t) : t \ge 0)$ is a $d$-dimensional standard Brownian motion.
\end{thm}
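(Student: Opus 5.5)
The plan is to copy the proof of Theorem~1 coordinate by coordinate, and then to reuse the exponential time change from Theorem~\ref{thm 2}. First, a remark on reading the statement. I interpret it the way the one-dimensional case requires:
\begin{itemize}
\item the Brownian limit in Assumption~\ref{a3} is $C^{1/2}B_d(t)/t$, as in Assumption~\ref{a1 ci};
\item the quadratic form in $\mathscr P_d(t)$ carries the factor $s$ and uses $v_d(s)^{-1}$;
\item the indicator in $Z_d$ is $\ind(B_d(s)^\top B_d(s)\le z^2 s)$, the square-root boundary.
\end{itemize}
These readings make $\E Z_d(1)=1-\delta$ and make the second claim consistent.

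\textbf{Step 1: the functional and the rescaled process.} For $x\in D_d(0,\infty)$ I would put
\[
\varphi_d(x)=\int_0^1 \ind\bigl(s\,x(s)^\top x(s)\le z^2\bigr)\,\dee s .
\]
Define the rescaled, studentized process
\[
Y_\epsilon(t)=\epsilon^{-1/2}\,v_d(t/\epsilon)^{-1/2}\bigl(\alpha_d(t/\epsilon)-\alpha_d\bigr).
\]
By Assumption~\ref{a4}, $v_d(t/\epsilon)^{-1/2}\to C^{-1/2}$ a.s., uniformly in $t\in[a,b]$ for each $a>0$. This uses only that $v_d(u)\to C$ as $u\to\infty$ and that the matrix inverse square root is continuous near the positive definite $C$.

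Combining this with Assumption~\ref{a3} (a random-change/product lemma in $D_d[a,b]$, whose deterministic limit factor makes joint convergence automatic) gives $Y_\epsilon\Rightarrow (B_d(t)/t:t>0)$ in $D_d(0,\infty)$.

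\textbf{Step 2: continuity of $\varphi_d$.} The functional $\varphi_d$ is continuous at any continuous $x$ for which $\{s: s|x(s)|^2=z^2\}$ has Lebesgue measure zero. On $[a,1]$ this follows from uniform convergence and dominated convergence, and the contribution of $[0,a]$ is at most $a$, which is sent to zero at the end.

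For the Brownian limit the exceptional set is negligible:
\[
\E\int_0^1 \ind\bigl(|B_d(s)|^2=z^2 s\bigr)\,\dee s=\int_0^1 \p(\chi^2_d=z^2)\,\dee s=0 .
\]
The continuous mapping theorem then yields $\varphi_d(Y_\epsilon)\Rightarrow Z_d(1)$. The substitution $u=t/\epsilon$, exactly as in the display following \eqref{2.1}, identifies $\varphi_d(Y_\epsilon)$ with $\mathscr P_d(1/\epsilon)$. Handling the small-$s$ region, where convergence holds only on compacts of $(0,\infty)$, is the one genuinely delicate point. I would dispose of it by the crude bound $\int_0^a\le a$ together with a standard approximation argument (Theorem~3.2 of Billingsley type).

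\textbf{Step 3: the Ornstein--Uhlenbeck representation.} Set $\beta_d(t)=e^{-t/2}B_d(e^t)$. Its coordinates are independent, centered, Gaussian processes with covariance $e^{-|u|/2}$, exactly as computed in the proof of Theorem~\ref{thm 2}. Hence $\beta_d$ is a stationary $d$-dimensional OU process solving $\dee\chi_d=-\chi_d/2\,\dee t+\dee\widetilde B_d$.

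The change of variables $s=e^u$ in \eqref{8} gives
\[
Z_d(1)=\int_{-\infty}^0 e^{u}\,\ind\bigl(\beta_d(u)^\top\beta_d(u)\le z^2\bigr)\,\dee u=\int_0^\infty e^{-u}\,\ind\bigl(|\beta_d(-u)|^2\le z^2\bigr)\,\dee u .
\]
Finally, $(\beta_d(-u):u\in\R)$ is a centered Gaussian process with the same (even) covariance function, so it has the same law as $(\beta_d(u))$; this is reversibility, which is immediate in the Gaussian case. That gives the stated representation. I expect Steps 3 and the continuity part of Step 2 to be routine; the main obstacle is the uniform control near $t=0$ in Step 2, together with the joint convergence with $v_d$.
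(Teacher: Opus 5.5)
Your proposal is correct and follows essentially the argument the paper intends when it says the proof is virtually identical to the scalar case: continuous mapping for $\varphi_d$ at the studentized process, as in Theorem 1, then the time change $e^{-t/2}B_d(e^t)$ and reversibility of the stationary Ornstein--Uhlenbeck process, as in Theorem~\ref{thm 2}. Your corrected readings of the statement (limit $C^{1/2}B_d(t)/t$, quadratic form $s(\alpha_d(s)-\alpha_d)^\top v_d(s)^{-1}(\alpha_d(s)-\alpha_d)$, boundary $B_d(s)^\top B_d(s)\le z^2 s$) are exactly what the statement needs to hold, and the bound $\int_0^a\le a$ inside your continuity argument already handles the small-$s$ region, so you do not need a separate approximation theorem.
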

We omit the proof, which is virtually identical to the scalar case.

\ack The information, views, and opinions expressed herein are solely the authors' and do not necessarily represent the views of Monte Fischer's employer or its affiliates. His employer and its affiliates are not responsible for and did not verify the accuracy of the information contained herein.

\fund
There are no funding bodies to thank relating to the creation of this article.

\competing
There were no competing interests to declare which arose during the preparation or publication process of this article.

\data
Code to produce the data tabulated and plotted in this article can be found at \url{https://github.com/montefischer/ci_coverage}.
Claude Opus 5.5 (Anthropic) was used to assist literature review, to draft accessibility alt text, and to translate notebook code for Figures \ref{fig conditional densities} and \ref{fig conditional paths} into reproducible scripts with size and style matching the journal's layout; simulations, text, and proofs are our own.

\bibliographystyle{APT}
\bibliography{confidence}

\end{document}